\newtheorem{defin}{Definition}
\newtheorem{utv}{Proposition}
\tikzset{every node/.style={circle, draw}}
\title{Algorithm for computing the partition function of the Potts model for SP-graphs}
\author{
Sofya Mukhamedzhanova\thanks{Institute of Mathematics and Mechanics, Kazan Federal University, Kremlevskaya ul. 18, Kazan, 420008 Russia.
E-mail: {\tt sofyaenglish@gmail.com}. Supported by RSF (project No. 24-21-00158).} \\
Bulat Sabirov\thanks{Institute of Information technology and Intelligent Systems, Kazan Federal University, Kremlevskaya ul. 18, Kazan, 420008 Russia.
E-mail: {\tt bulatsabirow@gmail.com}.} \\
Amir Mukhamedzhanov\thanks{Institute of Information technology and Intelligent Systems, Kazan Federal University, Kremlevskaya ul. 18, Kazan, 420008 Russia.
E-mail: {\tt atrukamir@gmail.com}.}
}
\newcommand{\shorttitle}{\@title}
\def\@maketitle{%
  \newpage
  \begin{center}%
  \let \footnote \thanks
    {\small
    }
    \vskip 0.5em
    \rule{\linewidth}{0.04cm}
    \vskip 3.5em
    {\LARGE \textbf{\textsc{\@title}} \par}%
    \vskip 1.5em
    \vskip 2.5em%
    {\large
      \lineskip .5em%
      \begin{tabular}[t]{c}%
        \@author
      \end{tabular}\par}%
  \end{center}%
  \par
  }
\definecolor{lightgreen}{HTML}{098842}
\definecolor{lightblue}{HTML}{3886BC}
\definecolor{purple}{HTML}{55007D}
\begin{document}
\thispagestyle{empty}
\maketitle

\begin{abstract}
The q-state Potts model is a fundamental framework in statistical physics and graph theory, with its partition function encoding rich information about spin configurations. The multivariate Tutte polynomial (known as the partition function of the Potts model) can be defined on an arbitrary finite graph $G$  and encodes a lot of important combinatorial information about the graph. 
As a special case, it contains the familiar Tutte polynomial with two variables
and, consequently, its specialization with one variable, such as the chromatic polynomial, the flow polynomial and the reliability polynomial.
The main goal of this paper is to present an efficient algorithm for computing the Potts model partition function on SP-graphs (series-parallel graphs) with arbitrary weights. The algorithm for SP-graphs is based on simplifying the graph by replacing several edges with a single edge of equivalent weight, which significantly reduces computational complexity. In this paper, we present a linear-time algorithm for exactly computing the Potts model partition function on series-parallel graphs (SP-graphs).
\end{abstract}


\section{Introduction}
The purpose of this paper is to develop and describe an algorithm for calculating the partition function of the Potts model for series-parallel graphs with arbitrary weights. The Potts model is a powerful tool widely used in physics, mathematics, and graph theory. It allows us to model complex systems, including magnetic materials and crystal structures.

The multivariate Tutte polynomial, also known as the partition function of the Potts model (see~\cite{sokal}), can be defined for any finite graph and encodes key information about the structure of the graph. Let \( G = (V, E) \) be a finite undirected graph, where \( V \) is the set of vertices and \( E \) is the set of edges. The partition function is defined as follows:

\begin{equation}
\label{eq:pots}
Z_G(q, \mathbf{v}) = \sum_{A \subseteq E} q^{K(A)} \prod_{e \in A} v_e,
\end{equation}
where \( q \) and \( \mathbf{v} = \{v_e\}_{e \in E} \) are commuting variables, and \( K(A) \) is the number of connected components in subgraph \( (V, A) \). If we set \( v_e = -1 \) for all \( e \in E \), then \( Z_G \) takes the value~$1$ for each proper coloring of the graph and $0$ for each improper one, which allows the computation of the chromatic polynomial~\cite{sokal}.

\section{Preliminaries}

The standard definition of a series-parallel graph is: series–parallel graphs are graphs with two distinguished vertices called terminals, formed recursively from $K_2$ by two simple composition operations --- series and parallel composition.~\cite{sp}

In this paper, we provide an alternative definition of series-parallel graphs, equivalent to the standard one, but not starting from $K_2$; instead, it reduces an already existing SP-graph to $K_2$. But first, we introduce two definitions.

\begin{defin}
\label{def:def1}
Edges $e_1,\ldots,e_n$ are said to be \emph{parallel} if they connect the same pair of vertices $x$ and $y$.
\end{defin}

\begin{defin}
\label{def:def2}
Edges $e_1,\ldots,e_n$ form a \emph{series connection} between vertices $x$ and $y$ if they form a path in the graph between these vertices and all the internal vertices of the path have degree 2.
\end{defin}
In this paper,  by a parallel connection we mean several edges that connect the same pair of vertices (Definition 1), whereas a parallel composition is the graph operation that glues several SP-graphs in parallel at their terminals.

By a series connection, we mean a path of edges all of whose internal vertices have degree 2 (Definition 2), whereas a series composition is the graph operation that glues two SP-graphs in series at their terminals. 
Now we present an alternative definition of a series-parallel graph, different from the conventional one. 

A series-parallel graph is a graph that can be reduced to the graph $K_2$  by alternating the following operations:
\begin{enumerate}
    \item Replacing all series connections (paths) with a single edge;
    \item Replacing all parallel edges with a single edge.
\end{enumerate}
\FloatBarrier
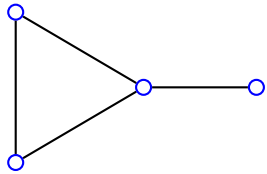
\begin{figure}
\centering
\begin{tikzpicture}
  \node[circle, fill=white, draw=blue, thick, inner sep=2pt] (A) at (-1,1) {};
  \node[circle, fill=white, draw=blue, thick, inner sep=2pt] (B) at (-1,-1) {};
  \node[circle, fill=white, draw=blue, thick, inner sep=2pt] (C) at (0.7,0) {};
  \node[circle, fill=white, draw=blue, thick, inner sep=2pt] (D) at (2.2,0) {};
  \draw[black, thick] (A) -- (B);
  \draw[black, thick] (A) -- (C);
  \draw[black, thick] (B) -- (C);
   \draw[black, thick] (C) -- (D);

\end{tikzpicture}
\caption{Example of a SP-graph}
\end{figure}
\FloatBarrier
\begin{utv} \label{p1}\cite{our,sokal}
If the graph $G$ contains $n$ parallel edges $e_1, \ldots, e_n$ connecting the same pair of vertices $x$ and $y$, they can be replaced by a single edge $e$ with weight:
\begin{equation}
\label{eq:parallel}
v_e = \prod_{i=1}^n \left(1 + v_{e_i}\right) - 1.
\end{equation}
\end{utv}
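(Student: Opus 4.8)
The plan is to prove the (equivalent) quantitative statement that $Z_{G}(q,\mathbf v) = Z_{G'}(q,\mathbf v')$, where $G'$ is obtained from $G$ by deleting the parallel bundle $P=\{e_1,\dots,e_n\}$ and inserting one new edge $e$ between $x$ and $y$, and $\mathbf v'$ agrees with $\mathbf v$ off the bundle while taking the value~\eqref{eq:parallel} on $e$. Since $G$ and $G'$ differ only inside $P$, the whole argument amounts to matching, against the surrounding structure, the contribution of an arbitrary subset of $P$ in $G$ with the contribution of $\{e\}$ or $\emptyset$ in $G'$.

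First I would split the defining sum~\eqref{eq:pots} for $G$ along $P$: every $A\subseteq E$ is written uniquely as $A = B \cup A'$ with $B \subseteq E\setminus P$ and $A' \subseteq P$, so that
\[
Z_G(q,\mathbf v) \;=\; \sum_{B \subseteq E\setminus P} \Bigl(\prod_{f\in B} v_f\Bigr) \sum_{A'\subseteq P} q^{K(B\cup A')} \prod_{i:\,e_i\in A'} v_{e_i}.
\]
Applying the same splitting to $G'$ (with $P$ replaced by the single edge $e$) produces the same outer sum over the sets $B$, with the same prefactors $\prod_{f\in B}v_f$, so it suffices to check that for each fixed $B$ the two inner sums agree.

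The key observation is that $K(B\cup A')$ depends on a nonempty $A'\subseteq P$ only through the fact that $A'$ is nonempty: a single edge of $P$ already joins $x$ and $y$, so the other edges of $P$ are redundant and $K(B\cup A') = K(B\cup\{e_1\})$ for every nonempty $A'$. Using the identity $\sum_{\emptyset\neq A'\subseteq P}\prod_{e_i\in A'}v_{e_i} = \prod_{i=1}^n(1+v_{e_i})-1$, the inner sum for $G$ becomes $q^{K(B)} + q^{K(B\cup\{e_1\})}\bigl(\prod_{i=1}^n(1+v_{e_i})-1\bigr)$. I would then distinguish two cases according to whether $x$ and $y$ already lie in the same component of $(V,B)$: if they do, then $K(B\cup\{e_1\})=K(B)$ and the inner sum is $q^{K(B)}\prod_{i=1}^n(1+v_{e_i})$; if they do not, then $K(B\cup\{e_1\})=K(B)-1$. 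In both cases a one-line rearrangement shows the result equals $q^{K(B)} + q^{K(B\cup\{e\})}v_e$ with $v_e$ as in~\eqref{eq:parallel}, which is precisely the inner sum for $G'$; summing over $B$ gives the claim.

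The only genuine subtlety is the bookkeeping with $K$: one must argue carefully that adjoining \emph{any} nonempty subset of a parallel bundle changes the component count by the same amount (namely $0$ or $-1$), independently of which subset, and handle the "already connected'' and "not yet connected'' situations separately. Everything else reduces to the product expansion $\prod_{i=1}^n(1+v_{e_i}) = \sum_{A'\subseteq P}\prod_{e_i\in A'}v_{e_i}$ and elementary regrouping of terms.
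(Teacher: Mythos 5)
Your argument is correct and complete. The paper itself gives no proof of this proposition --- it is stated as a known result with citations to the literature --- so there is nothing internal to compare against; your derivation is the standard subgraph-expansion argument that follows directly from the definition~\eqref{eq:pots}. The decomposition $A=B\cup A'$, the observation that $K(B\cup A')$ depends on a nonempty $A'\subseteq P$ only through the fact that some edge of the bundle is present (so that $K(B\cup A')=K(B\cup\{e_1\})=K(B\cup\{e\})$), and the expansion $\sum_{A'\subseteq P}\prod_{e_i\in A'}v_{e_i}=\prod_{i=1}^{n}(1+v_{e_i})$ together give the claim; in fact your final case split on whether $x$ and $y$ are already joined in $(V,B)$ is not even needed, since the identity $q^{K(B)}+q^{K(B\cup\{e_1\})}\bigl(\prod_{i=1}^{n}(1+v_{e_i})-1\bigr)=q^{K(B)}+q^{K(B\cup\{e\})}v_e$ holds termwise regardless. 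For reference, the cited source of Sokal proves the same fact for integer $q$ via the spin representation, using $\prod_{i=1}^{n}\bigl(1+v_{e_i}\delta(\sigma_x,\sigma_y)\bigr)=1+v_e\,\delta(\sigma_x,\sigma_y)$; your version works directly for indeterminate $q$, which is what the multivariate Tutte polynomial setting requires.
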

\begin{utv}\label{s1}\cite{our,sokal}
For series connections of edges $e_1, \ldots, e_n$, where there exists a chain of vertices $x_1, \ldots, x_{n+1}$ connected in series, one can use the following transformations:

1. Replace the sequence of edges $e_1, \ldots, e_n$ with a single edge $e = x_1x_{n+1}$ with weight:
\begin{equation}
\label{eq:series}
v_e = \frac{q \prod_{i=1}^n v_{e_i}}{\prod_{i=1}^n \left(v_{e_i} + q\right) - \prod_{i=1}^n v_{e_i}}.
\end{equation}

2. Multiply the partition function of the resulting graph by the prefactor:
\begin{equation}
\label{eq:prefactor}
\mathrm{pref} = \frac{\prod_{i=1}^n \left(v_{e_i} + q\right) - \prod_{i=1}^n v_{e_i}}{q}.
\end{equation}
\end{utv}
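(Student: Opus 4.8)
The goal is to prove that $Z_G(q,\mathbf v) = \mathrm{pref}\cdot Z_{G'}(q,\mathbf v')$, where $G'$ is obtained from $G$ by deleting the interior vertices $x_2,\dots,x_n$ of the chain and inserting the single edge $e=x_1x_{n+1}$, the weight $\mathbf v'$ agrees with $\mathbf v$ on every edge of $G$ other than $e_1,\dots,e_n$ and assigns $e$ the weight $v_e$ of~\eqref{eq:series}, and $\mathrm{pref}$ is as in~\eqref{eq:prefactor}. I will work over the field of rational functions in $q$ and the $v_f$, so that dividing by $q$ and by the denominator of~\eqref{eq:series} is legitimate; the resulting identity of rational functions then specializes to every choice of parameters at which both sides are defined. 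The plan is to settle the case $n=2$ directly from the definition~\eqref{eq:pots} and then to deduce the general case by induction on $n$, collapsing the chain one interior vertex at a time.

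\textbf{The case $n=2$.} Here there is a single interior vertex $x_2$ of degree $2$, with $e_1=x_1x_2$ and $e_2=x_2x_3$; put $E_0=E\setminus\{e_1,e_2\}$ and $V_0=V\setminus\{x_2\}=V(G')$. I group the sum~\eqref{eq:pots} according to the trace $A'=A\cap E_0$, so that $A=A'\cup S$ with $S\subseteq\{e_1,e_2\}$ and $\prod_{f\in A}v_f=\bigl(\prod_{f\in A'}v_f\bigr)\prod_{f\in S}v_f$. The key point is that, for fixed $A'$, the count $K(A'\cup S)$ depends on $A'$ only through $\kappa$, the number of components of $(V_0,A')$, and the single bit ``$x_1$ and $x_3$ lie in the same component of $(V_0,A')$''. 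If that bit is true, the four choices $S=\varnothing,\{e_1\},\{e_2\},\{e_1,e_2\}$ contribute, after factoring out $\prod_{f\in A'}v_f$, the amounts $q^{\kappa+1},\ q^{\kappa}v_{e_1},\ q^{\kappa}v_{e_2},\ q^{\kappa}v_{e_1}v_{e_2}$ (in the last case $x_2$ merely attaches to the common component), while the $A'$-group of $Z_{G'}$ contributes $q^{\kappa}(1+v_e)$. If the bit is false, the four contributions are $q^{\kappa+1},\ q^{\kappa}v_{e_1},\ q^{\kappa}v_{e_2},\ q^{\kappa-1}v_{e_1}v_{e_2}$ (the full pair now merges the two components of $x_1$ and $x_3$), and the $A'$-group of $Z_{G'}$ contributes $q^{\kappa-1}(q+v_e)$. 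Summing each group, $Z_G=\mathrm{pref}\cdot Z_{G'}$ reduces to the two scalar identities
\begin{align*}
q+v_{e_1}+v_{e_2}+v_{e_1}v_{e_2} &= \mathrm{pref}\,(1+v_e),\\
(v_{e_1}+q)(v_{e_2}+q) &= \mathrm{pref}\,(q+v_e).
\end{align*}
For $n=2$, formulas~\eqref{eq:series} and~\eqref{eq:prefactor} simplify to $\mathrm{pref}=v_{e_1}+v_{e_2}+q$ and $v_e=v_{e_1}v_{e_2}/(v_{e_1}+v_{e_2}+q)$, and a one-line substitution confirms both identities; in fact solving this $2\times2$ linear system for $(\mathrm{pref},v_e)$ is exactly what produces~\eqref{eq:series} and~\eqref{eq:prefactor} when $n=2$.

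\textbf{Induction on $n$.} The case $n=1$ is vacuous ($v_e=v_{e_1}$, $\mathrm{pref}=1$). Let $n\ge 2$ and assume the statement for $n-1$. The subchain $x_1,\dots,x_n$ has interior vertices $x_2,\dots,x_{n-1}$, still of degree $2$ in $G$, so the inductive hypothesis replaces $e_1,\dots,e_{n-1}$ by one edge $e'=x_1x_n$ of weight $v_{e'}=qQ/(P-Q)$, with $P=\prod_{i=1}^{n-1}(v_{e_i}+q)$ and $Q=\prod_{i=1}^{n-1}v_{e_i}$, at the cost of the prefactor $(P-Q)/q$. In the new graph the edges $e'=x_1x_n$ and $e_n=x_nx_{n+1}$ are in series at the degree-$2$ vertex $x_n$, so the case $n=2$ collapses them to $e=x_1x_{n+1}$ of weight $v_{e'}v_{e_n}/(v_{e'}+v_{e_n}+q)$ with an additional prefactor $v_{e'}+v_{e_n}+q$. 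Substituting $v_{e'}=qQ/(P-Q)$ gives $v_{e'}+v_{e_n}+q=\bigl((v_{e_n}+q)P-v_{e_n}Q\bigr)/(P-Q)$, whence the accumulated prefactor equals $\bigl((v_{e_n}+q)P-v_{e_n}Q\bigr)/q$ and the accumulated weight equals $q\,v_{e_n}Q\big/\bigl((v_{e_n}+q)P-v_{e_n}Q\bigr)$. Since $(v_{e_n}+q)P=\prod_{i=1}^{n}(v_{e_i}+q)$ and $v_{e_n}Q=\prod_{i=1}^{n}v_{e_i}$, these are precisely~\eqref{eq:prefactor} and~\eqref{eq:series}, closing the induction.

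\textbf{The main obstacle.} Everything hinges on the component bookkeeping in the case $n=2$: one must recognize that the remainder of the graph affects the local sum only through the single bit of whether the two terminals are already connected, and then record correctly that $K$ drops by one, upon inserting both chain edges, exactly when that bit is false. Once this is in place, the rest — the $2\times2$ verification and the telescoping of the products $\prod(v_{e_i}+q)$ and $\prod v_{e_i}$ in the induction — is routine algebra.
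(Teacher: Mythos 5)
The paper itself gives no proof of this proposition: it is quoted as a known reduction rule with citations to Sokal and to Lerner--Mukhamedjanova, so there is nothing in the text to compare your argument against line by line. Your proof is correct and self-contained, and it follows the standard route for deriving the series rule: the component bookkeeping in your $n=2$ case is right (for fixed $A'$ the local sum depends only on $\kappa$ and on whether $x_1$ and $x_3$ are already joined outside the chain, and $K$ drops by one on inserting both edges exactly in the ``not joined'' case), the resulting $2\times2$ system does characterize $(\mathrm{pref},v_e)$ and is satisfied by \eqref{eq:series}--\eqref{eq:prefactor}, and the telescoping of $\prod(v_{e_i}+q)$ and $\prod v_{e_i}$ in the inductive step is verified correctly. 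Two points worth making explicit if this were to be included: the induction tacitly uses that the intermediate graph may be a multigraph (the new edge $x_1x_n$ can be parallel to an existing edge), which your $n=2$ computation does handle since it never assumes simplicity, only that the interior vertex has degree exactly $2$; and the endpoints $x_1,x_{n+1}$ must be distinct so that $e$ is not a loop, which is guaranteed by Definition~\ref{def:def2} since the chain is a path.
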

Recall (\cite{our}) that a graph $H$ is called a ``necklace'' graph if it has two vertices $x$ and $y$, called the ``source'' and  the ``sink'', respectively, such that all edges of $E(H)$ belong to some series connections between these vertices. Consequently, all the vertices of this graph, except $x$ and $y$, have degree 2 and also belong to series connections between the source and the sink. The different series connections between $x$ and $y$ are called the levels of the ``necklace'' graph and are numbered $0$ to $m$.

\begin{utv}[\cite{our}]
\label{utv12:box} 
Let $G$ contain a ``necklace'' subgraph $H$ with arbitrary weights $v_{ij}$, for $i=0,\ldots,m$ and $j=1,\ldots,k_i$, assigned to the edges of the $i$-th level of this ``necklace''. Suppose that all the vertices of $H$, except the source and the sink, are not incident to any other vertices of the graph $G$. Then the partition function $Z_G$ remains unchanged if the subgraph $H$ is replaced by a single edge $e$, connecting the source and the sink, with weight
\begin{equation}
\label{eg:z3}
v_e = \prod_{i=0}^m \left( \frac{\prod_{j=1}^{k_i} \left(v_{ij}+q\right) + (q-1)
\prod_{j=1}^{k_i} v_{ij}}{\prod_{j=1}^{k_i}
\left(v_{ij}+q\right)-\prod_{j=1}^{k_i} v_{ij}}\right) - 1,
\end{equation}
and the computed partition function is multiplied by the prefactor
$$
\prod_{i=0}^m \frac{\prod_{j=1}^{k_i} \left(v_{ij}+q\right)-\prod_{j=1}^{k_i} v_{ij}}{q}.
$$
\end{utv}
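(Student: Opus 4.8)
The plan is to derive this proposition by composing the two elementary reductions already established: the series reduction of Proposition~\ref{s1}, applied to each level of the necklace, followed by the parallel reduction of Proposition~\ref{p1}, applied to the resulting bundle of edges between the source $x$ and the sink $y$.

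First I would fix the necklace subgraph $H$ with levels $0,\ldots,m$, the $i$-th level being a series connection of edges with weights $v_{i1},\ldots,v_{ik_i}$ along a path $x = x^{(i)}_1, x^{(i)}_2,\ldots,x^{(i)}_{k_i+1} = y$. By the definition of a necklace every internal vertex $x^{(i)}_2,\ldots,x^{(i)}_{k_i}$ has degree $2$ inside $H$, and by hypothesis none of these vertices is incident to any further vertex of $G$; hence each such vertex has degree exactly $2$ in $G$. This is precisely the condition under which Proposition~\ref{s1} applies, so I can replace the $i$-th level by a single edge $e_i = xy$ of weight
\begin{equation*}
v_{e_i} = \frac{q\prod_{j=1}^{k_i} v_{ij}}{\prod_{j=1}^{k_i}\left(v_{ij}+q\right)-\prod_{j=1}^{k_i} v_{ij}},
\end{equation*}
at the cost of multiplying the partition function by $\mathrm{pref}_i = \dfrac{\prod_{j=1}^{k_i}\left(v_{ij}+q\right)-\prod_{j=1}^{k_i} v_{ij}}{q}$. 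Doing this for each level one at a time is legitimate because collapsing one level does not change the degree of any internal vertex on the remaining levels (it only adds an edge between $x$ and $y$); a straightforward induction on $m$ then shows that after all $m+1$ reductions the subgraph $H$ has been replaced by $m+1$ parallel edges $e_0,\ldots,e_m$ joining $x$ and $y$, and the partition function has acquired the total factor $\prod_{i=0}^m \mathrm{pref}_i$, which is exactly the prefactor in the statement.

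It remains to merge $e_0,\ldots,e_m$. Since after the series reductions these are $m+1$ parallel edges between the same pair of vertices, Proposition~\ref{p1} replaces them by one edge $e = xy$ of weight $v_e = \prod_{i=0}^m\left(1+v_{e_i}\right)-1$, with no further prefactor. The only computation left is the algebraic identity
\begin{equation*}
1+v_{e_i} = \frac{\prod_{j=1}^{k_i}\left(v_{ij}+q\right)-\prod_{j=1}^{k_i} v_{ij}+q\prod_{j=1}^{k_i} v_{ij}}{\prod_{j=1}^{k_i}\left(v_{ij}+q\right)-\prod_{j=1}^{k_i} v_{ij}} = \frac{\prod_{j=1}^{k_i}\left(v_{ij}+q\right)+(q-1)\prod_{j=1}^{k_i} v_{ij}}{\prod_{j=1}^{k_i}\left(v_{ij}+q\right)-\prod_{j=1}^{k_i} v_{ij}},
\end{equation*}
and substituting it into the product yields exactly formula~\eqref{eg:z3}. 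Combining the two stages, $Z_G$ equals the prefactor $\prod_{i=0}^m\mathrm{pref}_i$ times the partition function of the graph in which $H$ is replaced by the single edge $e$ of weight $v_e$, as claimed.

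The proof is essentially bookkeeping, so there is no deep obstacle; the one point that genuinely needs care is checking that the hypotheses of Propositions~\ref{p1} and~\ref{s1} remain satisfied throughout the sequence of reductions --- in particular that the internal vertices keep degree $2$ after earlier levels are collapsed (so that the series reductions may be performed in any order) and that the edges $e_0,\ldots,e_m$ are genuinely parallel at the moment Proposition~\ref{p1} is invoked. The verification of the rational-function identity for $1+v_{e_i}$ is routine.
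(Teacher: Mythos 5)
Your derivation is correct: collapsing each level by the series rule of Proposition~\ref{s1}, collecting the per-level prefactors, and then merging the resulting $m+1$ parallel edges by Proposition~\ref{p1} yields exactly the weight~\eqref{eg:z3} and the stated prefactor, and the algebraic identity for $1+v_{e_i}$ checks out. Note that the paper itself gives no proof of this proposition --- it is quoted from~\cite{our} --- but your composition of the two elementary reductions, including the care taken that internal vertices retain degree $2$ throughout and that the reduced edges are genuinely parallel, is precisely the standard argument the statement rests on.
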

Before formulating the proposed algorithm, another idea for its operation was put forward and investigated.

It was that, in order to compute the partition function, one should identify the articulation points of an SP-graph and use them to decompose the original graph into ``necklace'' subgraphs. 






However, this algorithm has several problematic aspects. First, not every SP-graph contains articulation points, so this decomposition is not universally applicable.  However, there is a well-known linear - time algorithm~\cite{tarjan} for finding all articulation points in graphs. Moreover, finding all simple paths between ``source'' and ``sink'' is, in general case, would require a specialized adaptation for the SP-graph class. Each biconnected block with \( p \) alternative routes requires enumerating all \( p \) simple paths between the terminals of  blocks. If such blocks are composed in series, the total number of paths grows as a product of options. 

In a typical ``moderately parallel'' graph, this number grows faster than polynomially, but remains significantly below the worst-case exponential bound. However, if one simply applies the formula for computing the partition function~\ref{eq:pots}, the complexity becomes exponential.

These issues motivated the adoption of a different approach, the use of SP-decomposition trees.
\section{Algorithm for Finding the Partition Function for Series-Parallel Graphs}
An SPQR (``S'' stands for ``Series'', 
``P'' stands for ``Parallel'', 
``Q'' represents a single edge (a trivial component), 
``R'' stands for ``Rigid'' (a 3-connected component)  ) tree is a tree data structure used in computer science, and more specifically graph algorithms, to represent the triconnected components of a graphs.

 The SPQR-tree of a graph may be constructed in linear time and has several applications in dynamic graph algorithms and graph drawing.\cite{spq} There are well-known results (see \cite{spq1}) showing that SP-graphs cannot contain triconnected subgraphs; therefore, the definition of the SPQR-tree for them can be simplified by omitting the R-type nodes. An SPQ-tree, or decomposition tree, of an SP-graph is a binary tree whose internal nodes describe the type of edge connection in the graph (S for series connections and P for parallel connections) and whose leaves (Q) represent the single edges.

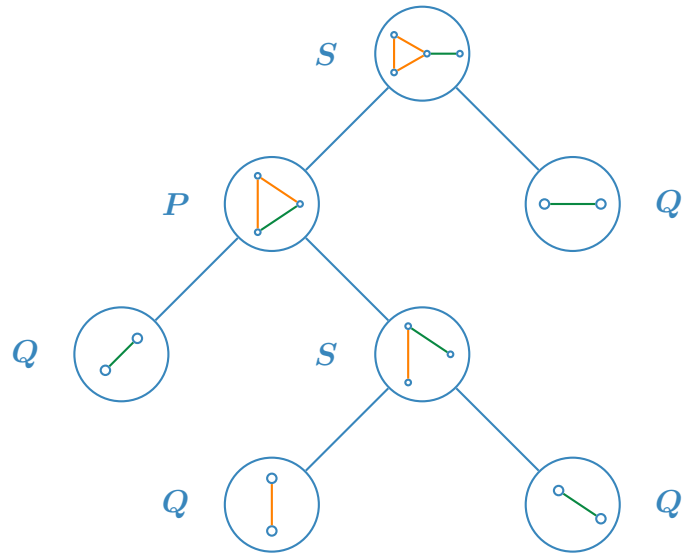
\begin{figure}[ht]
    \centering
    \begin{tikzpicture}[every node/.style={circle, draw=lightblue, minimum size=1.25 cm, thick}, 
                    edge label/.style={draw=lightblue, midway, font=\tiny, thick},
                    node distance=0.1 pt,
                    baseline=(current bounding box.base)
                    ]
        \node[label={[text=lightblue]left:$\boldsymbol{P}$}] (a) at (0,0) {};
        \node[label={[text=lightblue]right:$\boldsymbol{Q}$}] (b) at (4,0) {};
        \node[label={[text=lightblue]left:$\boldsymbol{S}$}] (c) at (2,2) {};
        \node[label={[text=lightblue]left:$\boldsymbol{S}$}] (d) at (2,-2) {};  
        \node[label={[text=lightblue]left:$\boldsymbol{Q}$}] (e) at (-2,-2) {};  
        \node[label={[text=lightblue]right:$\boldsymbol{Q}$}] (f) at (4,-4) {}; 
        \node[label={[text=lightblue]left:$\boldsymbol{Q}$}] (g) at (0,-4) {}; 

        \begin{scope}[shift={(a.center)}, scale=0.75]
          \node[minimum size=0.75 mm, inner sep=0pt] (a1) at (-0.25, 0.5) {};
          \node[minimum size=0.75 mm, inner sep=0pt] (a2) at (-0.25, -0.5) {};
          \node[minimum size=0.75 mm, inner sep=0pt] (a3) at (0.5, 0) {};
          \draw[draw=orange, thick] (a1) -- (a2);
          \draw[draw=lightgreen, thick] (a2) -- (a3);  
          \draw[draw=orange, thick] (a1) -- (a3);  
        \end{scope}

        \begin{scope}[shift={(b.center)}, scale=0.75]
          \node[minimum size=1.25mm, inner sep=0pt] (a1) at (-0.5, 0) {};
          \node[minimum size=1.25mm, inner sep=0pt] (a2) at (0.5, 0) {};
          \draw[draw=lightgreen, thick] (a1) -- (a2);
        \end{scope}

        \begin{scope}[shift={(c.center)}, scale=0.5]
          \node[minimum size=0.75 mm, inner sep=0pt] (a1) at (-0.75, 0.5) {};
          \node[minimum size=0.75 mm, inner sep=0pt] (a2) at (-0.75, -0.5) {};
          \node[minimum size=0.75 mm, inner sep=0pt] (a3) at (0.125, 0) {};
          \node[minimum size=0.75 mm, inner sep=0pt] (a4) at (1, 0) {};
          \draw[draw=orange, thick] (a1) -- (a2);
          \draw[draw=orange, thick] (a2) -- (a3);  
          \draw[draw=orange, thick] (a1) -- (a3);            
          \draw[draw=lightgreen, thick] (a3) -- (a4);
        \end{scope}

        \begin{scope}[shift={(d.center)}, scale=0.75]
          \node[minimum size=0.75 mm, inner sep=0pt] (a1) at (-0.25, 0.5) {};
          \node[minimum size=0.75 mm, inner sep=0pt] (a2) at (-0.25, -0.5) {};
          \node[minimum size=0.75 mm, inner sep=0pt] (a3) at (0.5, 0) {};
          \draw[draw=orange, thick] (a1) -- (a2); 
          \draw[draw=lightgreen, thick] (a1) -- (a3);  
        \end{scope}

        \begin{scope}[shift={(e.center)}, scale=0.85]
          \node[minimum size=1.25 mm, inner sep=0pt] (a2) at (-0.25, -0.25) {};
          \node[minimum size=1.25 mm, inner sep=0pt] (a3) at (0.25, 0.25) {};
          \draw[draw=lightgreen, thick] (a2) -- (a3);  
        \end{scope}

        \begin{scope}[shift={(f.center)}, scale=0.75]
          \node[minimum size=1.25mm, inner sep=0pt] (a1) at (-0.25, 0.25) {};
          \node[minimum size=1.25mm, inner sep=0pt] (a2) at (0.5, -0.25) {};
          \draw[draw=lightgreen, thick] (a1) -- (a2);
        \end{scope}

        \begin{scope}[shift={(g.center)}, scale=0.7]
          \node[minimum size=1.25mm, inner sep=0pt] (a1) at (0, 0.5) {};
          \node[minimum size=1.25mm, inner sep=0pt] (a2) at (0, -0.5) {};
          \draw[draw=orange, thick] (a1) -- (a2);
        \end{scope}
        \draw[draw=lightblue, thick] (a) -- (c);
        \draw[draw=lightblue, thick] (b) -- (c);
        \draw[draw=lightblue, thick] (a) -- (e);
        \draw[draw=lightblue, thick] (a) -- (d);
        \draw[draw=lightblue, thick] (f) -- (d);
        \draw[draw=lightblue, thick] (g) -- (d);
    \end{tikzpicture}
\caption{Example of a SP-tree}
\end{figure}

An SPQ-tree (hereafter referred to as an SP-tree) can be constructed via edge reduction rules that identify parallel and series connections until the graph is reduced to $K_2$. This process operates in O$(|V|+|E|$) time and uses O$(|V|+|E|$)  space and can be described as follows \cite{ufkapano}:
\begin{enumerate}[label=\arabic*.]
\item Find all parallel edges in the graph and reduce them using the formula \eqref{eq:parallel}. Thus, in the general case, transform a multigraph into a simple graph.
\item Find all vertices with degree $2$.
\item For each such vertices:
    \begin{enumerate}[label=\arabic{enumi}.\arabic*]
    \item Retrieve the adjacent vertices.
    \item Check if there is an edge between these adjacent vertices.
        \begin{itemize}
            \item If so, mark it as a parallel edge connection.
            \item Otherwise, it is a series connection.
        \end{itemize}
    \item Push the current vertex($c$), its adjacent vertices ($a$, $b$), and connection type onto the stack.
    \item For each adjacent vertex, check its degree.
        \begin{itemize}
            \item If its degree is $3$, add it to the list of vertices having degree $2$.
        \end{itemize}
    \item Remove the edges connecting the considered vertex to its adjacent vertices.
    \end{enumerate}
\item For the remaining graph, check the count of vertices with degree $1$.
    \begin{itemize}
        \item If the number of such vertices is not equal to $2$, then the input graph is not series-parallel.
        \item Otherwise, create a root node that represents the single edge.
    \end{itemize}
\item Build the SP-tree using stack.
\begin{itemize}
    \item For each recorded connection (``series'', ``parallel'' or single egde) and the vertices $c$, $a$, $b$ in the stack, the algorithm retrieves the corresponding node from the partial SP‑tree and replaces it with a new internal node representing that connection.
    \begin{itemize}
        \item In the case of a series connection, the SP-tree node corresponding vertices $a,b$ is replaced with an internal node representing a serial structure, where the original path from $a$ to $b$ is subdivided by introducing intermediate nodes, resulting in a path of two connected edges.
          \begin{figure}[ht]
            \centering
            \begin{subfigure}[h]{0.45\textwidth}
            \centering
            \begin{tikzpicture}[every node/.style={circle, draw}, 
                                edge label/.style={draw=none, midway, font=\tiny},
                                node distance=0.1 pt,
                                baseline=(current bounding box.base)]
              \node[label={left:$a$}] (a) at (0,0) {~};
              \node[label={right:$b$}] (b) at (3,0) {~};
              \node[label={[label distance=-0mm]above:$c$}] (c) at (1.5,1.5) {~};
              \draw (a) -- (c);
              \draw (b) -- (c);
            \end{tikzpicture}
            \captionsetup{skip=6mm}
            \caption{\centering{Example of an SP-graph having a series connection}}
            \end{subfigure}
            \hfill
            \begin{subfigure}[h]{0.45\textwidth}
            \centering
            \begin{tikzpicture}[every node/.style={circle, draw}, 
                                edge label/.style={draw=none, midway, font=\small},
                                baseline=(current bounding box.base)]
                \node[thick, minimum size=1.25 cm, draw=lightblue, label={[text=lightblue]left:$\boldsymbol{Q}$}] (a) at (0,0) {};
                \node[thick, minimum size=1.25 cm, draw=lightblue, label={[text=lightblue]right:$\boldsymbol{Q}$}] (b) at (4,0) {};
                \node[thick, minimum size=1.25 cm, draw=lightblue, label={[text=lightblue]left:$\boldsymbol{S}$}] (c) at (2,2) {};
        
                \begin{scope}[shift={(a.center)}, scale=0.75]
                  \node[minimum size=1.25mm, inner sep=0pt] (a1) at (-0.25, -0.25) {};
                  \node[minimum size=1.25mm, inner sep=0pt] (a2) at (0.25, 0.25) {};
                  \draw[draw=orange, thick] (a1) -- (a2);
                \end{scope}
        
                \begin{scope}[shift={(b.center)}, scale=0.75]
                  \node[minimum size=1.25mm, inner sep=0pt] (b1) at (0.25, -0.25) {};
                  \node[minimum size=1.25mm, inner sep=0pt] (b2) at (-0.25, 0.25) {};
                  \draw[draw=lightgreen, thick] (b1) -- (b2);
                \end{scope}
        
                \begin{scope}[shift={(c.center)}, scale=0.75]
                  \node[minimum size=1.25mm, inner sep=0pt] (c1) at (-0.5, -0.25) {};
                  \node[minimum size=1.25mm, inner sep=0pt] (c2) at (0.5, -0.25) {};
                  \node[minimum size=1.25mm, inner sep=0pt] (c3) at (0, 0.25) {};
                  \draw[draw=orange, thick] (c1) -- (c3);
                  \draw[draw=lightgreen, thick] (c2) -- (c3);
                \end{scope}
        
                \draw[draw=lightblue, thick] (a) -- (c);
                \draw[draw=lightblue, thick] (b) -- (c);
            \end{tikzpicture}
            \caption{\centering{SP-tree corresponding to the SP-graph from (a)}}
            \end{subfigure}
            \caption{Example for the case of series connection}
        \end{figure}
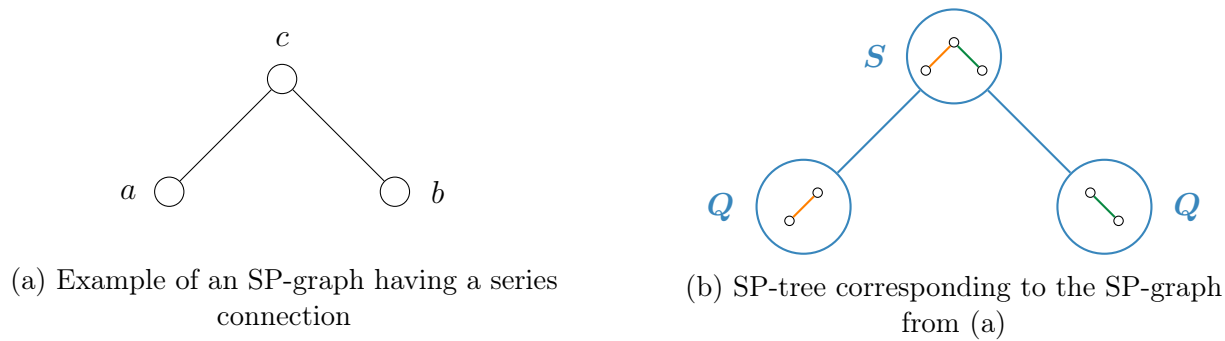
        \FloatBarrier
        \item  In the case of a parallel connection, the node becomes a parallel node with two ``children'': one representing a direct edge between the ``source'' and ``sink'', and the second representing a series connection of two vertices through the intermediate node.
        \begin{figure}[ht]
            \centering
            \begin{subfigure}[h]{0.45\textwidth}
            \centering
            \begin{tikzpicture}[every node/.style={circle, draw}, 
                                edge label/.style={draw=none, midway, font=\tiny},
                                node distance=0.1 pt,
                                baseline=(current bounding box.base)]
              \node[label={left:$a$}] (a) at (0,0) {~};
              \node[label={right:$b$}] (b) at (3,0) {~};
              \node[label={[label distance=-0mm]above:$c$}] (c) at (1.5,1.5) {~};
              \draw (a) -- (c);
              \draw (b) -- (c);
              \draw (a) -- (b);
            \end{tikzpicture}
            \captionsetup{skip=6mm}
            \caption{\centering{Example of an SP-graph having a parallel connection}}
            \end{subfigure}
            \hfill
            \begin{subfigure}[h]{0.45\textwidth}
            \centering
            \begin{tikzpicture}[every node/.style={circle, draw}, 
                                edge label/.style={draw=none, midway, font=\small},
                                baseline=(current bounding box.base)]
                \node[thick, minimum size=1.25 cm, draw=lightblue, label={[text=lightblue]left:$\boldsymbol{Q}$}] (a) at (0,0) {};
                \node[thick, minimum size=1.25 cm, draw=lightblue, label={[text=lightblue]right:$\boldsymbol{S}$}] (b) at (4,0) {};
                \node[thick, minimum size=1.25 cm, draw=lightblue, label={[text=lightblue]left:$\boldsymbol{P}$}] (c) at (2,2) {};
                \node[thick, minimum size=1.25 cm, draw=lightblue, label={[text=lightblue]left:$\boldsymbol{Q}$}] (d) at (2,-2) {};  
                \node[thick, minimum size=1.25 cm, draw=lightblue, label={[text=lightblue]right:$\boldsymbol{Q}$}] (e) at (6,-2) {};  
                \begin{scope}[shift={(a.center)}, scale=1.25]
                  \node[minimum size=1.25mm, inner sep=0pt] (a1) at (-0.25, 0) {};
                  \node[minimum size=1.25mm, inner sep=0pt] (a2) at (0.25, 0) {};                  
                  \draw[draw=lightgreen, thick] (a1) -- (a2);
                \end{scope}
        
                \begin{scope}[shift={(b.center)}, scale=0.75]
                  \node[minimum size=1.25mm, inner sep=0pt] (c1) at (-0.5, -0.25) {};
                  \node[minimum size=1.25mm, inner sep=0pt] (c2) at (0.5, -0.25) {};
                  \node[minimum size=1.25mm, inner sep=0pt] (c3) at (0, 0.25) {};
                  \draw[draw=orange, thick] (c1) -- (c3);
                  \draw[draw=lightgreen, thick] (c2) -- (c3);
                \end{scope}
        
                \begin{scope}[shift={(c.center)}, scale=0.75]
                  \node[minimum size=1.25mm, inner sep=0pt] (c1) at (-0.5, -0.25) {};
                  \node[minimum size=1.25mm, inner sep=0pt] (c2) at (0.5, -0.25) {};
                  \node[minimum size=1.25mm, inner sep=0pt] (c3) at (0, 0.25) {};
                  \draw[draw=orange, thick] (c1) -- (c3);
                  \draw[draw=orange, thick] (c2) -- (c3);
                  \draw[draw=lightgreen, thick] (c2) -- (c1);
                \end{scope}

                \begin{scope}[shift={(d.center)}, scale=0.75]
                  \node[minimum size=1.25mm, inner sep=0pt] (a1) at (-0.25, -0.25) {};
                  \node[minimum size=1.25mm, inner sep=0pt] (a2) at (0.25, 0.25) {};                  
                  \draw[draw=orange, thick] (a1) -- (a2);
                \end{scope}
                
                \begin{scope}[shift={(e.center)}, scale=0.75]
                  \node[minimum size=1.25mm, inner sep=0pt] (a1) at (0.25, -0.25) {};
                  \node[minimum size=1.25mm, inner sep=0pt] (a2) at (-0.25, 0.25) {};                  
                  \draw[draw=lightgreen, thick] (a1) -- (a2);
                \end{scope}   
                
                \draw[draw=lightblue, thick] (a) -- (c);
                \draw[draw=lightblue, thick] (b) -- (c);
                \draw[draw=lightblue, thick] (b) -- (d);
                \draw[draw=lightblue, thick] (b) -- (e);
            \end{tikzpicture}
            \caption{\centering{SP-tree corresponding to the SP-graph from (a)}}
            \end{subfigure}
            \caption{Example for the case of parallel connection}
        \end{figure}
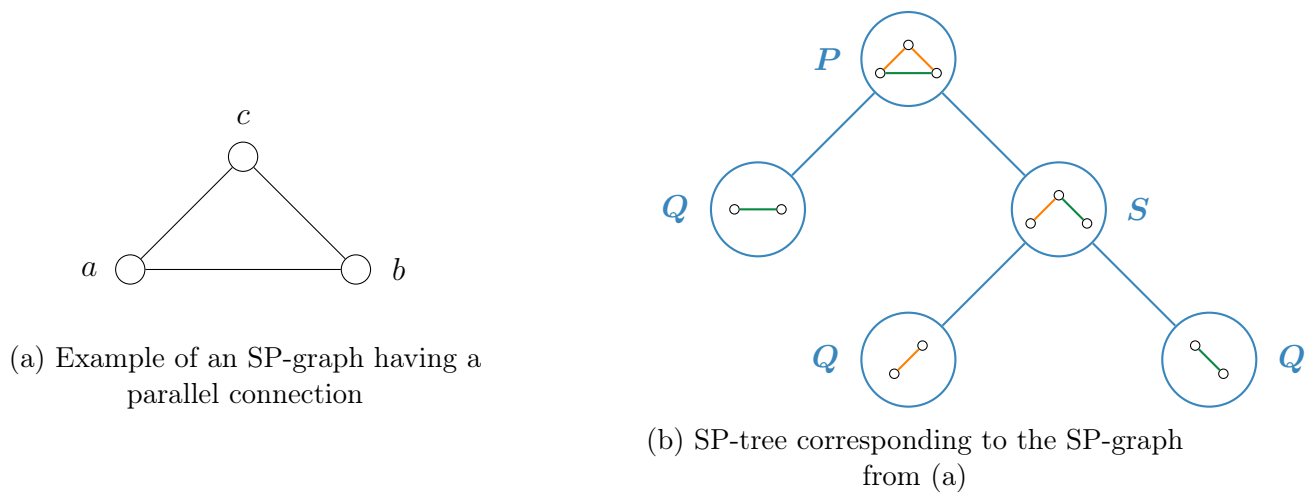 
    \end{itemize}
\end{itemize}
\end{enumerate}

This procedure can be summarized in the pseudocode below:
\par
\begin{algorithm}[H]
\caption{Build SP-tree}
\KwIn{SP-graph $G(V,E)$}
\KwOut{SP-tree rooted at node $root$} 

$\mathrm{G}_1(\mathrm{V}_1,\mathrm{E}_1) \gets$ \texttt{Copied graph $G$}\;
$\mathrm{degree2} \gets \{ v \mid v \in \mathrm{V}_1 : \deg(v) = 2 \}$\;
$\mathrm{stack} \gets \emptyset$\;
\While{$\mathrm{degree2} \neq \emptyset$}{
    $\mathrm{source} \gets$ \texttt{pop the element from } $\mathrm{degree2}$\;
    \If{$\mathrm{deg}(\mathrm{source}) \neq 2$}{
        \texttt{Skip cycle iteration}\;
    }
    $\mathrm{node}_1$ \texttt{ and } $\mathrm{node}_2 \gets$ \texttt{neighbors of } $\mathrm{source}$ \texttt{ vertex in } $\mathrm{G}_1$\;
    \If{$(\mathrm{node}_1, \mathrm{node}_2) \in \mathrm{E}_1$}{
        \texttt{Push onto } $\mathrm{stack}$ \texttt{ parallel composition type, } $\mathrm{source}$, $\mathrm{neighbor}_1$ \texttt{ and } $\mathrm{neighbor}_2$\;
    }
    \Else{
        $\mathrm{E}_1 \gets \mathrm{E}_1 \cup \{(\mathrm{node}_1, \mathrm{node}_2)\}$\;
        $\mathrm{V}_1 \gets \mathrm{V}_1 \cup \{ \mathrm{node}_1, \mathrm{node}_2 \}$\;
        \texttt{Push onto } $\mathrm{stack}$ \texttt{ series composition type, } $\mathrm{source}$, $\mathrm{neighbor}_1$ \texttt{ and } $\mathrm{neighbor}_2$\;
    }
    \If{$\deg(\mathrm{node}_1) = 3$}{
        $\mathrm{degree2} \gets \mathrm{degree2} \cup \{\mathrm{node}_1\}$\;
    }
    \If{$\deg(\mathrm{node}_2) = 3$}{
        $\mathrm{degree2} \gets \mathrm{degree2} \cup \{\mathrm{node}_2\}$\;
    }
    $\mathrm{E}_1 \gets \mathrm{E}_1 \setminus \{ (\mathrm{source}, \mathrm{node}_1), (\mathrm{source}, \mathrm{node}_2) \}$\;
}
$\mathrm{degree1} \gets \{ v \mid v \in \mathrm{V}_1 : \deg(v) = 1 \}$\;
\If{$|\mathrm{degree1}| = 2$}{
    $\mathrm{node}_1$ \texttt{ and } $\mathrm{node}_2 \gets$ \texttt{Vertices in } $\mathrm{degree1}$\;
    $\mathrm{root} \gets$ \texttt{Node of SP-tree with source and sink } $\mathrm{node}_1$ \texttt{ and } $\mathrm{node}_2$ \texttt{ respectively}\;
}
\Else{
    \texttt{raise Error}\;
}
\While{$\mathrm{stack} \neq \emptyset$}{
    $\mathrm{composition}, \mathrm{source}, \mathrm{node}_1, \mathrm{node}_2 \gets$ \texttt{Pop the last pushed element from } $\mathrm{stack}$\;
    \texttt{Construct a SP-tree fragment depending on the value of variable } $\mathrm{composition}$\;
}
\end{algorithm}
\FloatBarrier
To obtain a weight $w$ for $K_2$ that preserves the Potts partition function of the original graph, the SP‑tree is traversed in a depth‑first manner. At each internal node, the ``child'' weights are combined using the corresponding composition rules, as defined in \eqref{eq:series} for the series composition and \eqref{eq:parallel} for the parallel composition. 
The prefactor \eqref{eq:prefactor} introduced by the series composition rule is also taken into account. 
The depth-first traversal procedure used to compute the effective weight is described in the pseudocode below. Recursively, it visits the SP-tree nodes and applies the appropriate combination rule at each step, depending on the node type.
\begin{algorithm}
\caption{Compute equivalent weight on \(K_2\) from SP-tree}
\KwIn{SP-tree rooted at node $root$ and constructed from SP-graph $G(V,E)$}
\KwOut{Equivalent edge weight $w$}

\SetKwFunction{Traverse}{Traverse}
\SetKwProg{Fn}{Function}{:}{}

$pref \gets 1$\;
\BlankLine
\Fn{\Traverse{$node$}}{
$left \gets$ \Traverse{$node.left$}\;
$right \gets$ \Traverse{$node.right$}\;

\If{$node.type$ \texttt{is series connection type}}{
$w \gets \frac{q \cdot left \times right}{(left + q)(right + q) - left \times right}$\;
$pref \gets pref \times \frac{left \times right}{weight}$\;
}
\ElseIf{$node.type$ \texttt{is parallel connection type}}{
$w \gets (1 + left)  (1 + right) - 1$\;
}
\Else{
$w \gets$ \texttt{weight of edge } $(node.source, node.sink)$ \texttt{ in } $G$\;
}
\Return{$w$}
}
\end{algorithm}
\FloatBarrier
For the Potts model in the graph $K_2$, the partition function is given by the simple explicit formula:
\begin{equation}
\label{eq:K2}
Z = \bigl(q^2 + q \times w\bigr) \times pref
\end{equation}

\section { Correctness of the algorithm}
To prove the correctness of the algorithm, we show that it transforms any SP-graph into a valid decomposition tree and correctly computes the Potts model partition function. The critical part is ensuring the decomposition tree construction is valid, since the subsequent stages (computing equivalent weights and the final sum) follow standard SP-graph reduction rules and known results. We rely on several properties of series–parallel (SP) graphs:

\begin{utv} \label {utv1} In any series-parallel graph (SP-graph), constructed from a single edge using only series and parallel connection, there exists at least one vertex of degree at most~2. In other words, every SP-graph has a vertex of degree 1 or 2.
\end{utv}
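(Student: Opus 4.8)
The plan is to argue by contradiction: suppose there is an SP-graph $G$ in which every vertex has degree at least $3$, and derive a contradiction from the recursive structure of SP-graphs. The natural tool is the recursive/inductive description of SP-graphs. I would set up an induction on the number of composition operations used to build $G$ from a single edge. The base case is $K_2$ itself, which trivially has two vertices of degree $1$. For the inductive step, $G$ is obtained as either a series composition $G = G_1 \cdot G_2$ or a parallel composition $G = G_1 \parallel G_2$ of two smaller SP-graphs $G_1$, $G_2$, each already having (by the induction hypothesis) a vertex of degree $\le 2$.

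Next I would track what happens to low-degree vertices under the two composition operations. In a series composition the two terminal vertices being glued get their degrees added, and in a parallel composition the two terminal pairs are identified, again summing degrees at the identified terminals; in both cases all \emph{non-terminal} vertices of $G_1$ and $G_2$ keep the exact degree they had. So the only way a low-degree witness vertex can be "destroyed" by a composition is if it is one of the terminals that gets merged. The key structural observation I want to exploit is that each of $G_1$ and $G_2$ has \emph{two} terminals, and each being an SP-graph on at least two vertices, so if its low-degree vertex is a terminal, then one can also look at the other terminal. I would argue that either one of the two terminals of $G_1$ (respectively $G_2$) already has degree $\le 2$ and survives, or else handle the boundary cases where $G_i$ is itself $K_2$ (both terminals of degree $1$, and one of them is non-terminal in the bigger graph after a series composition, or stays degree $1$ in a parallel composition unless glued to something). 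Essentially this is a short case analysis on whether each $G_i$ equals $K_2$ and on the type of the final composition.

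A cleaner alternative I would consider — and probably prefer for the write-up — is a global counting argument avoiding induction altogether. Every SP-graph is planar and has at most $2|V| - 3$ edges (a standard fact about series–parallel graphs, which I may cite or derive from the recursive construction). Hence $\sum_{v} \deg(v) = 2|E| \le 4|V| - 6 < 4|V|$, so the average degree is strictly less than $4$, which forces some vertex to have degree at most $3$. To get degree at most $2$ rather than $3$ I would refine this: either invoke that SP-graphs are exactly the graphs with no $K_4$-minor and use a sharper bound, or, more self-containedly, fall back on the inductive argument above which gives degree $\le 2$ directly. I expect the main obstacle to be the bookkeeping in the inductive step: making sure that in \emph{every} combination of (series vs. parallel) $\times$ ($G_1$ is/ isn't $K_2$) $\times$ ($G_2$ is/isn't $K_2$), a degree-$\le 2$ vertex genuinely survives into $G$ — in particular the parallel composition of two copies of $K_2$ (giving a $2$-cycle, still fine) and iterated series compositions (internal path vertices have degree $2$, fine) need to be checked explicitly so that no edge case slips through.
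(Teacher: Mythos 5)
Your overall strategy --- induction on the number of series/parallel compositions, with the counting bound $|E|\le 2|V|-3$ as a fallback --- is exactly the route the paper takes (the counting bound is even its Proposition~\ref{prop5}), and you correctly observe that the counting argument alone only yields a vertex of degree at most $3$, so the induction must carry the weight. However, the one sub-case you yourself flag as ``the main obstacle'' is precisely the point where your plan does not close, and it is the genuinely delicate step: suppose the \emph{only} degree-$\le 2$ vertex of $G_1$ guaranteed by the inductive hypothesis is the terminal $t_1$ that gets identified with $s_2$ in a series composition (and symmetrically for $G_2$). The merged vertex can then acquire degree up to $4$, and your two proposed escapes do not suffice: ``look at the other terminal'' fails because the other terminal of $G_i$ may well have degree $\ge 3$, and ``special-case $G_i=K_2$'' does not cover the situation, because the naive inductive hypothesis by itself does not rule out a non-$K_2$ summand whose unique low-degree vertex is the merged terminal. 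The standard repair is to \emph{strengthen} the inductive hypothesis, e.g.\ to ``every $2$-terminal SP-graph other than $K_2$ (and other than a bundle of parallel edges, in the multigraph setting) has a \emph{non-terminal} vertex of degree at most $2$'': non-terminal vertices keep their degrees under both compositions, so the witness survives automatically, and the exceptional small cases are checked by hand. Without some strengthening of this kind the induction has a hole. (To be fair, the paper's own proof is equally vague here --- it asserts that ``at least one of the graphs must have another low-degree vertex unless all other vertices have degree $>2$, which contradicts the inductive assumption,'' which is a non sequitur, since the inductive assumption only promises one low-degree vertex.)

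Two further points worth recording. First, as stated the proposition is false for multigraphs: a parallel composition of three copies of $K_2$ is a bundle of three parallel edges whose two vertices both have degree $3$; the statement needs either a simplicity assumption or the convention that parallel bundles have already been reduced, and neither your proposal nor the paper makes this explicit. Second, your instinct that the $K_4$-minor-free / treewidth-$2$ characterization gives $2$-degeneracy directly is sound and would be a legitimate alternative proof, but it needs a genuine citation or argument for $2$-degeneracy (not just the edge count, which, as you note, gives only average degree below $4$).
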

\begin{proof}

SP-graphs are defined recursively. The base case is the graph $K_2$, consisting of two vertices $s$ and $t$ connected by a single edge. If $X$ and $Y$ are SP-graphs with designated terminals $(s_X, t_X)$ and $(s_Y, t_Y)$, then we can create a new SP-graph in two ways:

\begin{itemize}
    \item \textbf{Series composition:} Identify $t_X$ with $s_Y$. The new terminals are $s_X$ and $t_Y$.
    \item \textbf{Parallel composition:} Identify $s_X$ with $s_Y$ and $t_X$ with $t_Y$. The new terminals are the merged pairs.
\end{itemize}

By recursively applying these operations to copies of $K_2$, we can construct any SP-graph.

Let us prove it by induction.

We prove the proposition by induction on the number of composition operations used in the construction.

\textbf{Base case:} In $K_2$, both vertices have degree 1. The claim holds.

\textbf{Inductive step:} Suppose any SP-graph constructed in $n$ steps has a vertex of degree $\leq 2$. Let $G$ be an SP-graph constructed in $n{+}1$ steps via series or parallel composition of $G_1$ and $G_2$.

\textbf{Case 1: Series Composition.} Let $G = Series_{composition}(G_1, G_2)$, where $t_1$ from $G_1$ is identified with $s_2$ from $G_2$. Denote this merged vertex by $v$. Then
\[
\deg_G(v) = \deg_{G_1}(t_1) + \deg_{G_2}(s_2).
\]
By the inductive hypothesis, $G_1$ has a vertex $x$ with $\deg_{G_1}(x) \leq 2$, and similarly for $G_2$. If $x \neq t_1$ and $y \neq s_2$, then both $x$ and $y$ remain valid low-degree vertices in $G$. If $x = t_1$ and $y = s_2$, then $v$ has
\[
\deg_G(v) \leq 4,
\]
but at least one of the graphs must have another low-degree vertex unless all other vertices have degree $> 2$, which contradicts the inductive assumption. Hence, $G$ contains a vertex of degree $\leq 2$.

\textbf{Case 2: Parallel Composition.} Let $G = Parallel_{composition}(G_1, G_2)$ where $s_1 \equiv s_2 = u$ and $t_1 \equiv t_2 = w$. Then
\[
\deg_G(u) = \deg_{G_1}(s_1) + \deg_{G_2}(s_2), \quad
\deg_G(w) = \deg_{G_1}(t_1) + \deg_{G_2}(t_2).
\]
By the inductive hypothesis, both $G_1$ and $G_2$ contain a vertex of degree $\leq 2$. If these are internal vertices, they remain in $G$. If they are terminal, say $x = s_1$ and $y = s_2$, then $u$ has
\[
\deg_G(u) = \deg_{G_1}(s_1) + \deg_{G_2}(s_2) \leq 4.
\]
Even if $u$ or $w$ has degree up to 4, one of the other vertices still satisfies the degree bound, preserving the claim.

Alternatively, SP-graphs can be reduced to $K_2$ using inverse operations—removal of parallel edges and suppression of degree-2 vertices. Since SP-graphs contain no $K_4$ minors and are never 3-connected, if all vertices had degree at least 3, reduction to $K_2$ would be impossible. Hence, a vertex of degree $\leq 2$ must always exist.

In all possible constructions of an SP-graph via series and parallel composition, the resulting graph contains a vertex of degree at most 2. This completes the inductive proof.
\end{proof}
\begin{utv} \label{prop5}
Let \(G=(V,E)\) be any simple series–parallel graph constructed from \(K_2\) by repeated series and parallel compositions. Then
\[
|E|\le 2|V|-3.
\]
\end{utv}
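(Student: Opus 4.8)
The plan is to prove the bound $|E| \le 2|V| - 3$ by induction on the number of composition operations used to build the simple SP-graph $G$ from $K_2$, exactly mirroring the recursive structure used in the proof of Proposition~\ref{utv1}. I would phrase the invariant carefully to account for simplicity: every simple SP-graph on $n \ge 2$ vertices with terminals $s, t$ satisfies $|E| \le 2|V| - 3$, and moreover the sharpest examples are those with an edge between the terminals, which I keep track of as a refinement if needed.

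\textbf{Base case.} For $K_2$ we have $|V| = 2$, $|E| = 1 = 2\cdot 2 - 3$, so the bound holds with equality.

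\textbf{Inductive step.} Suppose the bound holds for all simple SP-graphs built with at most $n$ compositions, and let $G$ be built with $n+1$ compositions as either a series or parallel composition of $G_1 = (V_1, E_1)$ and $G_2 = (V_2, E_2)$. For \emph{series composition}, one terminal vertex of $G_1$ is identified with one terminal vertex of $G_2$, so $|V| = |V_1| + |V_2| - 1$ and $|E| = |E_1| + |E_2|$; applying the inductive hypothesis to each factor gives
\[
|E| = |E_1| + |E_2| \le (2|V_1| - 3) + (2|V_2| - 3) = 2(|V_1| + |V_2| - 1) - 4 = 2|V| - 4 < 2|V| - 3,
\]
so the bound holds (with room to spare). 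For \emph{parallel composition}, both terminals are identified, so $|V| = |V_1| + |V_2| - 2$ and, since $G$ is assumed simple, at most one of $G_1, G_2$ may carry an edge directly between the shared terminals; introducing a parallel edge there and then reducing it via Proposition~\ref{p1} shows we never increase the edge count beyond what a single such edge contributes, so effectively $|E| \le |E_1| + |E_2| - 1$ after the simplification (the two terminal edges, if both present, collapse to one). Then
\[
|E| \le |E_1| + |E_2| - 1 \le (2|V_1| - 3) + (2|V_2| - 3) - 1 = 2(|V_1| + |V_2| - 2) - 3 = 2|V| - 3,
\]
which is exactly the claimed bound, attained when both factors are extremal and the parallel reduction is the binding constraint.

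\textbf{Main obstacle.} The delicate point is the parallel case: a naive count $|E| = |E_1| + |E_2|$ would give $2|V| - 2$, which is too weak, so the argument genuinely relies on the hypothesis that $G$ is \emph{simple}. I would make precise why simplicity forces the ``$-1$'': if both $G_1$ and $G_2$ contain their terminal-to-terminal edge, the parallel composition would create a double edge, which is disallowed in a simple graph (or must be merged by Proposition~\ref{p1} before it counts), so at least one terminal edge is absent from the union. Handling this cleanly — and ruling out the subtlety that after a parallel composition the resulting terminal edge might participate in \emph{further} parallel compositions down the line — is where the real care is needed; an alternative, cleaner route is to invoke the fact that simple SP-graphs are planar (being $K_4$-minor-free) and $2$-degenerate, and a connected $2$-degenerate graph on $|V|\ge 2$ vertices has at most $2|V| - 3$ edges by a direct degeneracy count, which sidesteps the composition bookkeeping entirely.
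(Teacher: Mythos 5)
Your inductive step for the parallel composition contains a genuine error. You claim that simplicity forces $|E|\le |E_1|+|E_2|-1$, but that inequality only holds in the case where \emph{both} factors carry a terminal-to-terminal edge and the two copies collapse into one. In the case where exactly one factor carries that edge (which is the generic simple case), nothing collapses and $|E|=|E_1|+|E_2|$ exactly. A concrete counterexample to your inequality: take $G_1=K_2$ (the single edge $st$) and $G_2$ the path $s$--$x$--$t$; their parallel composition is the triangle, with $|E|=3$ but $|E_1|+|E_2|-1=2$. So the step $|E|\le|E_1|+|E_2|-1$ is false, and with the honest count $|E|=|E_1|+|E_2|$ your naive bound gives only $2|V|-2$, which is too weak --- precisely the difficulty you yourself flagged as the delicate point, but did not actually resolve.

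The correct repair is the refinement you mention in your opening paragraph but then abandon: strengthen the induction hypothesis to the pair of statements (a) $|E|\le 2|V|-3$ always, and (b) $|E|\le 2|V|-4$ whenever there is \emph{no} edge joining the two terminals. Then in the parallel case at most one factor has the terminal edge, so you may apply (b) to the other factor and obtain $|E|\le(2|V_1|-3)+(2|V_2|-4)=2|V|-3$; the series case gives $2|V|-4$ outright, which also propagates (b). This is essentially what the paper's proof does when it invokes ``a refined inductive hypothesis'' to write $|E_1|\le 2|V_1|-4$ for the factor lacking the terminal edge (though the paper, too, states this refinement rather tersely instead of carrying it through the induction explicitly). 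Your closing aside --- that simple SP-graphs are $2$-degenerate, whence a direct degeneracy count gives $|E|\le 2|V|-3$ --- is in fact a sound and cleaner alternative route that avoids the composition bookkeeping entirely, but as written it is only a remark; had you developed it, it would have been a correct proof genuinely different from the paper's.
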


\begin{proof}
For planar graphs and bipartite graphs, the relationships between the number of edges and vertices are well known~\cite{distel}. Although the article~\cite{sp} contains all the necessary information  for deriving an estimate 
$|E|\le 2|V|-3$, but the statement itself and its proof are not given explicitly, so here we will provide a proof of this statement.
We proceed by induction on the number of edges \(m=|E|\).

\textbf{Base case.} For the graph \(K_2\), we have \(|V|=2\) and \(|E|=1\).  Hence
\[
|E| = 1 \le 2\cdot 2 -3 = 1,
\]
so the claim holds.

\textbf{Inductive step.} Assume the statement holds for all simple SP-graphs with fewer than \(m\) edges.  Let \(G\) be a simple SP-graph with \(|E|=m\).  Then \(G\) is obtained from two smaller SP-graphs \(G_1=(V_1,E_1)\) and \(G_2=(V_2,E_2)\) by either a series composition or a parallel composition.

\medskip

\emph{Case 1: Series composition.}  In a series composition, one terminal of \(G_1\) is identified with one terminal of \(G_2\).  Thus
\[
|V| = |V_1| + |V_2| -1,
\quad
|E| = |E_1| + |E_2|.
\]
By the inductive hypothesis,
\[
|E_1|\le 2|V_1|-3,
\quad
|E_2|\le 2|V_2|-3.
\]
Therefore
\[
|E| = |E_1|+|E_2|
\le (2|V_1|-3)+(2|V_2|-3)
= 2|V_1|+2|V_2|-6.
\]
Since \(2|V|=2|V_1|+2|V_2|-2\), it follows that
\[
|E|\le 2|V|-4 < 2|V|-3,
\]
and so the desired inequality holds in this case.

\medskip

\emph{Case 2: Parallel composition.}  In a parallel composition, both terminals of \(G_1\) are identified with the corresponding terminals of \(G_2\).  Hence
\[
|V| = |V_1| + |V_2| -2,
\quad
|E| = |E_1| + |E_2|.
\]
Moreover, since the resulting graph must remain simple, at least one of the graphs, say \(G_1\), cannot already have a direct edge between its two terminals (otherwise identification would create a double edge).  Consequently \(G_1\) has strictly fewer than its maximal allowed edges, and by a refined inductive hypothesis
\[
|E_1|\le 2|V_1|-4,
\quad
|E_2|\le 2|V_2|-3.
\]
Adding these gives
\[
|E| = |E_1|+|E_2|
\le (2|V_1|-4)+(2|V_2|-3)
= 2|V_1|+2|V_2|-7
= 2(|V_1|+|V_2|-2)-3
=2|V|-3,
\]
as required.

\medskip

By induction, the inequality \(|E|\le2|V|-3\) holds for all simple SP-graphs.
\end{proof}

\begin{utv}
In an SP-graph \( G = (V, E) \), if there exists exactly one vertex of degree 2, then there must also exist a vertex of degree 3.
\end{utv}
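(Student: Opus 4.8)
The plan is to argue by contradiction, combining the handshake lemma with the edge bound $|E|\le 2|V|-3$ of Proposition~\ref{prop5}, after first pinning down where vertices of degree $1$ can sit in an SP‑graph. Throughout I would assume $G$ is simple (if it is not, one first collapses parallel edges via Proposition~\ref{p1}, which is harmless here). Let $v_0$ be the unique vertex of degree $2$ and suppose, for contradiction, that $G$ has no vertex of degree $3$; then every vertex other than $v_0$ has degree $1$ or degree at least $4$.

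The first step is a structural observation: in every SP‑graph each non‑terminal vertex has degree at least $2$. This is a routine induction on the recursive construction — in a series composition the only vertex whose degree changes is the identified terminal $t_{G_1}=s_{G_2}$, whose new degree is $\deg_{G_1}(t_{G_1})+\deg_{G_2}(s_{G_2})\ge 2$, while in a parallel composition the non‑terminal vertices of the two parts are untouched. Hence at most the two terminals of $G$ can have degree $1$. Writing $k\in\{0,1,2\}$ for the number of degree‑$1$ vertices, the handshake lemma together with Proposition~\ref{prop5} gives
\[
4|V|-6 \;\ge\; 2|E| \;=\; \sum_{v\in V}\deg(v)\;\ge\; 2 + k + 4\bigl(|V|-1-k\bigr)\;=\;4|V|-2-3k ,
\]
so $3k\ge 4$ and therefore $k=2$. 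Thus both terminals $s,t$ of $G$ have degree $1$, and in particular $v_0$ is a non‑terminal vertex.

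Next I would sharpen the edge bound. If $|V|=3$ the graph is the path $s\!-\!v_0\!-\!t$ — a single series connection, which the algorithm reduces to $K_2$ in one step — and this degenerate instance is set aside; so assume $|V|\ge 4$. The idea is to peel the two degree‑$1$ terminals off one at a time: deleting a degree‑$1$ terminal of an SP‑graph together with its incident edge again leaves an SP‑graph (once more by induction on the construction — such a vertex is forced to be the free endpoint of a $K_2$ joined on by a series composition, so $G=K_2\ast(\text{rest})$ and the deletion returns exactly ``rest''). Since $s$ and $t$ are non‑adjacent (otherwise $G=K_2$), doing this twice produces an SP‑graph $H$ with $|V|-2\ge 2$ vertices and $|E|-2$ edges, and Proposition~\ref{prop5} applied to $H$ yields $|E|-2\le 2(|V|-2)-3$, that is $2|E|\le 4|V|-10$.

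On the other hand, since $s$ and $t$ have degree $1$, $v_0$ has degree $2$, and each of the remaining $|V|-3$ vertices has degree at least $4$,
\[
2|E| \;=\; 1 + 1 + 2 + \sum_{v\in V\setminus\{s,t,v_0\}}\deg(v)\;\ge\; 4 + 4\bigl(|V|-3\bigr)\;=\;4|V|-8 ,
\]
which contradicts $2|E|\le 4|V|-10$. Hence $G$ must contain a vertex of degree $3$. I expect the main obstacle to be precisely this bookkeeping around degree‑$1$ vertices and the lone exceptional graph $P_3$: for a $2$‑connected SP‑graph ($k=0$) the whole argument collapses to the single line $4|V|-6\ge 2|E|\ge 2+4(|V|-1)=4|V|-2$, an immediate contradiction, and all the work above is only needed to transfer that easy case to the general setting.
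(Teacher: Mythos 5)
Your proposal follows the same core mechanism as the paper's proof --- argue by contradiction and play the handshaking lemma off against the bound \(|E|\le 2|V|-3\) of Proposition~\ref{prop5} --- but it does considerably more work, and that extra work is not wasted. The paper's proof passes from ``no vertex of degree 3'' directly to ``all vertices other than \(v_0\) have degree at least 4,'' which is not the negation of the conclusion: it silently excludes vertices of degree 1. Your bookkeeping (non-terminal vertices have degree \(\ge 2\), so at most the two terminals can have degree 1; the count \(3k\ge 4\) then forces exactly two such vertices; peeling them off sharpens the edge bound to \(2|E|\le 4|V|-10\) and restores the contradiction) is precisely what is needed to close that gap. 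Moreover, in doing so you have effectively found that the statement as written is \emph{false}: the path \(P_3\), i.e.\ the series composition of two copies of \(K_2\), is an SP-graph with exactly one vertex of degree 2 and no vertex of degree 3. What you call ``setting aside a degenerate instance'' is really the exhibition of a counterexample, and you should say so explicitly: the proposition needs an added hypothesis such as \(|V|\ge 4\), or minimum degree \(\ge 2\) (in which case the paper's one-line count \(4|V|-2\le \sum_v\deg(v)\le 4|V|-6\) becomes legitimate). Two small points to tighten: the claim that a degree-1 terminal is ``the free endpoint of a \(K_2\) joined on by a series composition'' needs the associativity of series composition to hold literally (the degree-1 terminal may sit inside a larger series factor), though the deletion lemma it supports is true; and, like Proposition~\ref{prop5} itself, the whole argument presupposes \(G\) is simple, which is worth stating since the paper elsewhere works with multigraphs.
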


\begin{proof}
Assume the contrary: suppose that there exists an SP-graph \( G = (V, E) \) such that it has only one vertex of degree 2 (denote it by \( v_0 \)), and all other vertices have degree at least 4.

According to handshaking lemma the sum of degrees of all vertices of a graph is equal to twice the number of its edges:
\begin{equation}
    \sum_{v \in V} \deg(v) = 2|E|. \tag{3.2}
\end{equation}

Now use the previously established inequality~\ref{prop5}:
\[
|E| \leq 2|V| - 3 \quad \Rightarrow \quad 2|E| \leq 4|V| - 6.
\]

Therefore, the total degree satisfies:
\[
\sum_{v \in V} \deg(v) = \deg(v_0) + \sum_{v \in V \setminus \{v_0\}} \deg(v) \leq 4|V| - 6.
\]

Since all vertices except \( v_0 \) have degree at least 4, we also have:
\[
\sum_{v \in V} \deg(v) \geq \deg(v_0) + 4(|V| - 1) = 2 + 4(|V| - 1) = 4|V| - 2.
\]

Combining both estimates, we obtain:
\[
4|V| - 2 \leq \sum_{v \in V} \deg(v) \leq 4|V| - 6,
\]
which is a contradiction.

Hence, the assumption is false, and the theorem is proved.
\end{proof}

\begin{utv}
For decomposition trees built from the same SP-graph, the partition function of the Potts model computed on them is identical.
\end{utv}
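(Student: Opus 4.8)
\section*{Proof proposal}

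The plan is to prove the following stronger statement, from which the claim is immediate: for \emph{any} decomposition tree $T$ built from $G$, the value produced by the weight‑combination traversal (Algorithm~2) together with the final evaluation \eqref{eq:K2} equals $Z_G(q,\mathbf v)$. Since that quantity does not depend on $T$, any two decomposition trees of the same SP‑graph necessarily yield the same number. Thus the whole problem reduces to verifying that the traversal computes $Z_G$, independently of the shape of the tree; here ``decomposition tree of $G$'' is understood as a binary tree with $S$‑ and $P$‑labelled internal nodes and edge‑labelled leaves that evaluates to $G$ in the sense described in the construction above.

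To carry this out I would process the internal nodes of $T$ in the post‑order in which Algorithm~2 visits them, tracking the ``current graph'': set $G_0=G$ and let $G_k$ be the graph obtained after the first $k$ internal nodes have been processed. The invariant I would maintain is: (i) the part of $T$ already processed has been collapsed into single edges, so each processed node $u$ corresponds to an edge of $G_k$ carrying the weight returned by \texttt{Traverse}$(u)$; and (ii) $Z_G(q,\mathbf v)=\mathrm{pref}_k\cdot Z_{G_k}$, where $\mathrm{pref}_k$ is the value of the variable $pref$ at that moment. The base case $k=0$ is immediate, since $G_0=G$, $\mathrm{pref}_0=1$, and each leaf carries its original weight $v_e$. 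When the root has been processed, $G_N$ is $K_2$ with a single edge of weight $w$ and $Z_{K_2}(q,w)=q^2+qw$, so the invariant gives $Z_G=\mathrm{pref}_N\cdot(q^2+qw)$, which is exactly formula \eqref{eq:K2}.

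The inductive step is the heart of the argument and splits by node type. If the node processed at step $k$ is a $P$‑node, then by the definition of a decomposition tree its two children share both terminals, so in $G_{k-1}$ they have already become two parallel edges joining the same pair of vertices; Proposition~\ref{p1} with $n=2$ says that replacing them by a single edge of weight \eqref{eq:parallel} leaves the partition function unchanged, which is exactly the parallel rule applied by the traversal, and which moreover does not touch $pref$, so the invariant persists. If the node is an $S$‑node, its two children share exactly one terminal $c$, and — this is the point that needs care — $c$ is an internal vertex private to this subtree, so its only incident edges in $G_{k-1}$ are the two coming from the children, hence $\deg_{G_{k-1}}(c)=2$ and we have a genuine series connection of two edges with weights $v_1,v_2$; Proposition~\ref{s1} with $n=2$ then supplies the replacement weight $w$ of \eqref{eq:series} and the prefactor \eqref{eq:prefactor}, and one checks the identity $\frac{v_1v_2}{w}=\frac{(v_1+q)(v_2+q)-v_1v_2}{q}$, so the multiplicative update of $pref$ in the traversal is precisely that prefactor and $Z_G=\mathrm{pref}_k\cdot Z_{G_k}$ again holds. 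This closes the induction.

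I expect the main obstacle to be the structural book‑keeping just flagged in the $S$‑case: one must justify that the shared vertex of the two reduced child‑edges really has degree $2$ in the current graph, and, more broadly, that a decomposition tree of $G$ faithfully encodes $G$ at every stage of the bottom‑up collapse — equivalently, that the terminal pair attached to each node is consistent with gluing the children, and that each $S$‑node introduces a fresh vertex private to its subtree. This is exactly the well‑formedness property guaranteed by the tree‑construction procedure, and it rests on the structural facts established earlier — the existence of a vertex of degree $\le 2$ (Proposition~\ref{utv1}) and the edge bound $|E|\le 2|V|-3$ (Proposition~\ref{prop5}) — which are what make the reduction terminate at $K_2$ in the first place. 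A secondary, purely cosmetic point is that, because the tree is binary, only the $n=2$ instances of Propositions~\ref{p1} and~\ref{s1} are ever needed (one could also fold several levels together using Proposition~\ref{utv12:box}); the general multi‑edge statements are not required.
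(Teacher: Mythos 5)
Your proof is correct, but it takes a genuinely different route from the paper. The paper argues tree-independence abstractly: it cites the result of Valdes that SP-trees of the same graph are equivalent up to the order of series and parallel compositions, invokes the Church--Rosser theorem to reduce any such tree to a canonical form, and then appeals to the invariance of $Z_G$ under the reductions of Propositions~\ref{p1} and~\ref{s1} to conclude that all trees give the same value. You instead prove the stronger statement that the traversal of \emph{any} well-formed decomposition tree, combined with the final evaluation \eqref{eq:K2}, computes exactly $Z_G(q,\mathbf v)$, by a post-order induction maintaining the invariant $Z_G=\mathrm{pref}_k\cdot Z_{G_k}$; tree-independence then falls out for free because $Z_G$ does not mention the tree. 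Your approach buys more: it is self-contained (no appeal to confluence or to the Valdes equivalence), and it simultaneously establishes the correctness of the algorithm itself, which the paper treats as following from ``standard reduction rules'' without a detailed argument. The paper's approach buys brevity and cleanly separates the combinatorial fact about tree shapes from the algebraic invariance of $Z$. The one debt you correctly flag --- that the shared vertex of an $S$-node has degree $2$ in the current collapsed graph, i.e., that the tree faithfully encodes $G$ at every stage --- is a well-formedness property of the construction procedure rather than of your induction, and the paper's argument implicitly relies on the same structural facts; your verification of the identity $\tfrac{v_1v_2}{w}=\tfrac{(v_1+q)(v_2+q)-v_1v_2}{q}$ matching the $pref$ update in Algorithm~2 is a detail the paper omits entirely.
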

As stated in~\cite{valdes} SP-trees constructed for the same SP-graph are equivalent up to the order of series and parallel compositions, then by the Church–Rosser theorem~\cite{church} any such tree can be reduced to a single canonical form. Since, according to propositions~\ref{p1},\ref{s1}, the partition function of the Potts model remains invariant under the reduction of parallel and series edges (with the appropriate weight substituted for the new edge), the values of the partition function computed for any two SP-trees built from the same SP-graph coincide.
The performance data obtained during this work were validated and confirmed by additional experiments that empirically assessed the complexity of the algorithm. To simplify these experiments without sacrificing significant accuracy, the following assumptions were made:
\begin{itemize}
  \item The parameter \(q\), denoting the number of possible states of a vertex in the graph \(G(V,E)\), is treated as a constant in the range \([1,40]\) rather than as a variable.
  \item The weight of each edge $e$ in $G(V, E)$ is randomly assigned a value from the range $[10^{-5}, 5 \times 10^{-2}]$.
  \item All statistical outliers in the collected data are excluded from the final dataset used to plot the graph.
\end{itemize}
\begin{figure}[H]
    \centering
    \includegraphics[width=1\linewidth]{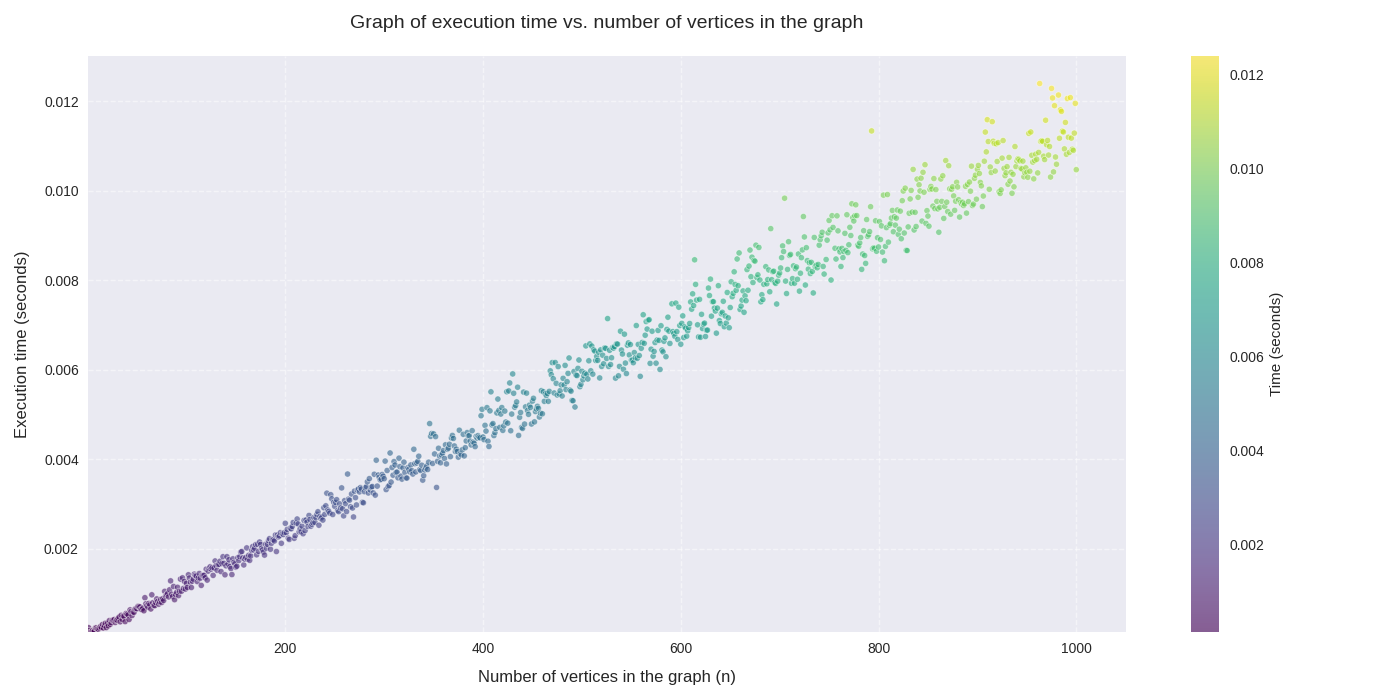}
    \caption{Execution Time of the Proposed Algorithm as a Function of Graph Size}
    \label{fig:linear}
\end{figure}

\noindent For a clear visual comparison, we also conducted experiments computing the Potts model partition function by its direct definition (see Fig.~\ref{fig:exponential}). As the resulting plots demonstrate, the proposed algorithm for SP‐graphs is far more efficient than the classical approach. For example, at \(n=20\) vertices, the traditional method requires about 140\,s to produce an answer, whereas the new algorithm takes less than 0.005\,s.
\begin{figure}[H]
    \centering
    \includegraphics[width=0.8\linewidth]{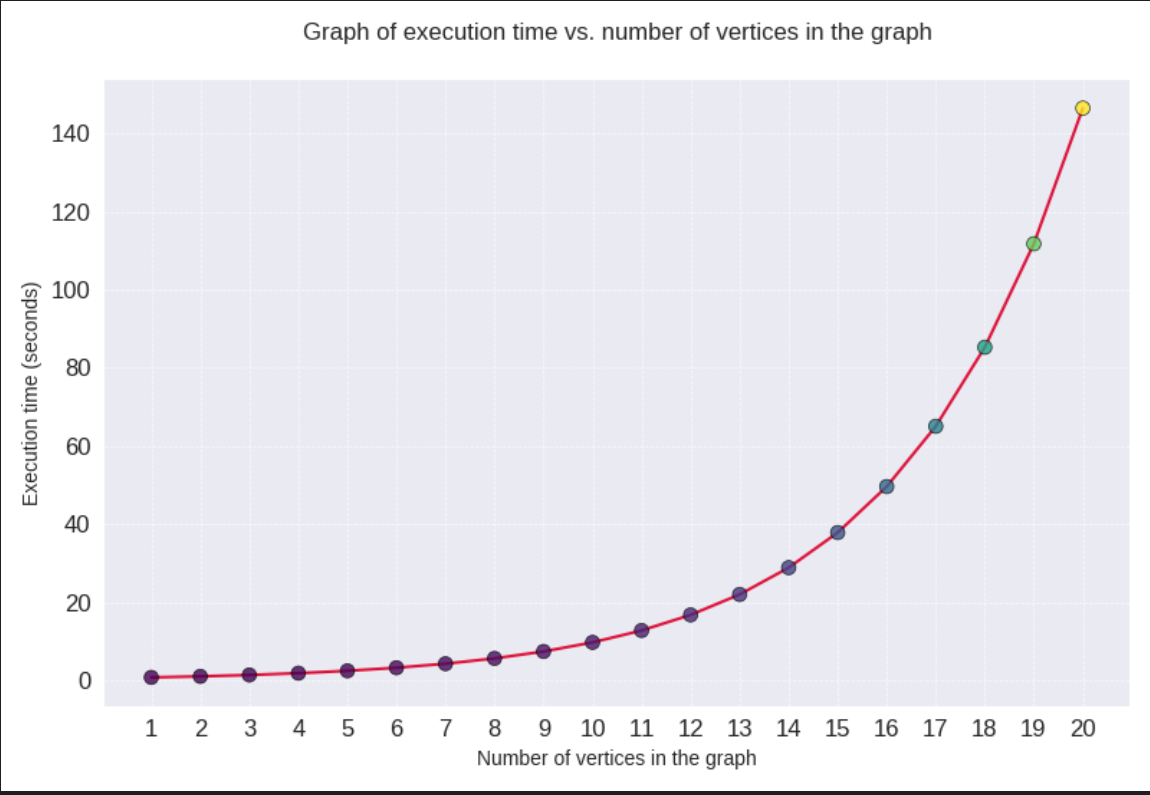}
    \caption{Inefficient scaling of the direct Potts partition function computation with graph size}
    \label{fig:exponential}
\end{figure}
\section{Conclusion}
We have developed an efficient linear-time algorithm for computing the partition function of the Potts model (i.e., the multivariate Tutte polynomial) for series-parallel graphs with arbitrary edge weights. The algorithm extends Sokal's series- and parallel-reduction identities \cite{sokal} and presents SP-graphs as SPQ-trees. By iteratively applying these reductions, the algorithm transforms the original graph into an equivalent two-vertex (single-edge) graph, from which the partition function can be obtained directly. As a result of these simplifications, the computational complexity of evaluating the Potts model partition function on series-parallel graphs using SPQ-trees is significantly reduced to linear complexity.


\nocite{*}

\end{document}